\documentclass[11pt]{amsart}
\usepackage{amsfonts,amssymb,amsmath,amsthm}
\usepackage{url}
\usepackage{enumerate}

\urlstyle{sf}

\theoremstyle{plain}
\newtheorem{theorem}{Theorem}
\newtheorem*{theorem*}{Theorem}

\newtheorem{corollary}{Corollary}

\newtheorem{example}{Example}

\numberwithin{equation}{section}

\begin{document}

\title{Generalized Stirling transform}

\author{Mourad Rahmani}
\address{USTHB, Faculty of Mathematics,
P. O. Box 32, El Alia,16111, Algiers, Algeria}
\email{mrahmani@usthb.dz}

\begin{abstract}
In this paper, algorithms are developed for computing the Stirling
transform and the inverse Stirling transform; specifically, we
investigate a class of sequences satisfying a two-term recurrence.
We derive a general identity which generalizes the usual Stirling
transform and investigate the corresponding generating functions
also. In addition, some interesting consequences of these results related
to classical sequences like Fibonacci, Bernoulli and the numbers of
derangements have been derived.
\end{abstract}
\keywords{Bernoulli polynomials, Fibonacci numbers, Hankel transform, two-term recurrence, Stirling transform.}
\subjclass{Primary 05A19, Secondary 11B68.}
\maketitle

\section{Introduction}
The Stirling numbers arise frequently in mathematics, especially in
enumerative problems. This is the reason of their important role in
combinatorial analysis, number theory, probability, graph theory,
calculus of finite differences and interpolation. The notations for
these numbers have never been standardized, this paper follows the
notation of Riordan for the signed Stirling numbers of the first
kind $s\left(  n,k\right)  $ and Knuth's
notation for the Stirling numbers of the second kind $%
\genfrac{\{}{\}}{0pt}{}{n}{k}%
$. \vskip 3mm

The Stirling transform of a sequence $\left(  a_{n}\right)  $ is the the
sequence $\left(  b_{n}\right)  $ given by
\begin{equation}
b_{n}=%
{\displaystyle\sum\limits_{k=0}^{n}}
\genfrac{\{}{\}}{0pt}{}{n}{k}%
a_{k},\label{1}%
\end{equation}
and the inverse transform is
\begin{equation}
a_{n}=%
{\displaystyle\sum\limits_{k=0}^{n}}
s\left(  n,k\right)  b_{k}\label{2}.
\end{equation}

The identity (\ref{1}) has a combinatorial interpretation given in \cite{sloane1}. If $a_{n}$ is the number of objects in some class with points
labeled $1,2,\ldots,n$ (with all labels distinct) then $b_{n}$ is the number of
objects with points labeled $1,2,\ldots,n$ (with repetitions allowed).

In this paper, algorithms are developed for computing the Stirling
transform and the inverse Stirling transform; specifically, we
investigate a class of sequences satisfying a two-term recurrence.
We derive a general identity which generalizes the usual Stirling
transform and investigate the corresponding generating functions
also. \vskip 2mm

Given a sequence $a_{m}:=a_{0,m}$ $(m\geq0)$. We construct an
infinite matrix $\mathcal{S}:=\left(  a_{n,m}\right)  $ as follows:
\vskip 2mm

The first row $a_{0,m}$ of the matrix is the initial sequence; the
first column $b_{n}:=a_{n,0}$ $(n\geq0)$ is called the final
sequence and, each
entry $a_{n,m}$ is given recursively by%
\begin{equation}
a_{n+1,m}=a_{n,m+1}+ma_{n,m}.\label{rec1}%
\end{equation}

Conversely, if we start with the final sequence, the matrix $\mathcal{S}$ can be
recovered by the recursive relations%
\begin{equation}
a_{n,m+1}=a_{n+1,m}-ma_{n,m}.\label{rec2}%
\end{equation}

\section{Definitions and notation}
In this section, we introduce some definitions and notations which
are useful in the rest of the paper. $\mathbb{N}$ being the set of
positive integers and $\mathbb{N}_{0}=\mathbb{N\cup}\left\{
0\right\}  .$

The falling and rising factorials are defined, respectively by%
\[
\left(  x\right)  _{n}=x\left(  x-1\right)  \cdots\left(  x-n+1\right)
,\left(  x\right)  _{0}=1
\]
and%
\[
\left\langle x\right\rangle _{n}=x\left(  x+1\right)  \cdots\left(
x+n-1\right)  ,\left\langle x\right\rangle _{0}=1.
\]

The (signed) Stirling numbers $s(n,k)$ of the first kind, which are usually
defined by%
\begin{equation}
\left(  x\right)  _{n}=%
{\displaystyle\sum\limits_{k=0}^{n}}
s\left(  n,k\right)  x^{k},\label{defstir1}%
\end{equation}
or by the following generating function%
\begin{equation}
\frac{1}{k!}\left(  \ln\left(  1+x\right)  \right)  ^{k}=%
{\displaystyle\sum\limits_{n\geq k}}
s\left(  n,k\right)  \frac{x^{n}}{n!}.\label{gensti1}%
\end{equation}
It follows from (\ref{defstir1}) or (\ref{gensti1}) that%
\begin{equation}
s\left(  n+1,k\right)  =s\left(  n,k-1\right)  -n\text{ }s\left(  n,k\right)\label{recs}
\end{equation}
and that%
\[
s(n,0)=\delta_{n,0}\text{ \ }\left(  n\in\mathbb{N}\right)  ,\text{ }s\left(
n,k\right)  =0\text{ \ }(k>n\text{ or }k<0),
\]
where $\delta_{n,m}$ denotes the Kronecker symbol.

The Stirling numbers $%
\genfrac{\{}{\}}{0pt}{}{n}{k}%
$ of the second kind count the number of possible partitions of a set of $n$
objects into $k$ disjoint blocks. These numbers can be defined explicitly by%
\[
x^{n}=%
{\displaystyle\sum\limits_{k=0}^{n}}
\genfrac{\{}{\}}{0pt}{}{n}{k}%
\left(  x\right)  _{k}.
\]

For any positive $r\in\mathbb{N}$ the quantity $%
\genfrac{\{}{\}}{0pt}{}{n}{k}%
_{r}$ denotes the number of partitions of a set of $n$ objects into exactly
$k$ nonempty, disjoint subsets, such that the first $r$ elements are in
distinct subsets. These numbers obey the recurrence relation
\begin{equation}
\begin{tabular}
[c]{lll}%
$
\genfrac{\{}{\}}{0pt}{0}{n}{k}%
_{r}=0,$ &  & $n<r,$\\
$
\genfrac{\{}{\}}{0pt}{0}{n}{k}%
_{r}=\delta_{k,r},$ &  & $\text{ }n=r,$\\
$%
\genfrac{\{}{\}}{0pt}{0}{n}{k}%
_{r}=k%
\genfrac{\{}{\}}{0pt}{0}{n-1}{k}%
_{r}+%
\genfrac{\{}{\}}{0pt}{0}{n-1}{k-1}%
_{r},$ &  & $n>r,$\label{tab}
\end{tabular}
\end{equation}
The exponential generating function is given by
\begin{equation}
{\displaystyle\sum\limits_{n\geq k}}
\genfrac{\{}{\}}{0pt}{0}{n+r}{k+r}%
_{r}\frac{x^{n}}{n!}=\frac{1}{k!}e^{rx}\left(  e^{x}-1\right)  ^{k}.\label{rstigen}
\end{equation}
The properties
\[
\genfrac{\{}{\}}{0pt}{0}{n}{r}%
_{r}=r^{n-r}
\]
and
\begin{equation}
\genfrac{\{}{\}}{0pt}{0}{n+r}{k+r}%
_{r}=%
\genfrac{\{}{\}}{0pt}{0}{n+r}{k+r}%
_{r-1}-(r-1)%
\genfrac{\{}{\}}{0pt}{0}{n+r-1}{k+r}%
_{r-1}\label{tig}
\end{equation}
are given in \cite{Broder}, which one can consult for more details on $r$-Stirling numbers.

\section{Combinatorial identities}

\begin{theorem}\label{th1}
Given an initial sequence $\left(  a_{0,m}\right)  _{m\geq0},$
define the matrix $\mathcal{S}$ by (\ref{rec1}). Then, the entries
of the infinite matrix
$\mathcal{S}$ are given by%
\begin{equation}
a_{n,m}=%
{\displaystyle\sum\limits_{k=0}^{n}}
\genfrac{\{}{\}}{0pt}{}{n+m}{k+m}%
_{m}a_{0,m+k}.
\end{equation}

\end{theorem}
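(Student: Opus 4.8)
The plan is to prove the identity by induction on the row index $n$, propagating the formula through the recurrence (\ref{rec1}) and using the $r$-Stirling relation (\ref{tig}) to collapse the coefficients that arise. For the base case $n=0$ the claimed right-hand side reduces to the single term $\genfrac{\{}{\}}{0pt}{}{m}{m}_{m}a_{0,m}$, and since $\genfrac{\{}{\}}{0pt}{}{m}{m}_{m}=m^{m-m}=1$ by the first displayed property following (\ref{rstigen}), this is exactly $a_{0,m}$, as required.

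For the inductive step, assume the formula holds for a given $n$ and every $m\geq0$. Applying it with $m$ replaced by $m+1$ and then shifting the summation index $k\mapsto k-1$ gives
\[
a_{n,m+1}=\sum_{k=0}^{n}\genfrac{\{}{\}}{0pt}{}{n+m+1}{k+m+1}_{m+1}a_{0,m+1+k}=\sum_{k=1}^{n+1}\genfrac{\{}{\}}{0pt}{}{n+m+1}{k+m}_{m+1}a_{0,m+k}.
\]
Substituting this and the inductive hypothesis for $a_{n,m}$ into $a_{n+1,m}=a_{n,m+1}+ma_{n,m}$ expresses $a_{n+1,m}$ as a linear combination of $a_{0,m},a_{0,m+1},\dots,a_{0,m+n+1}$, and it remains to check that the coefficient of each $a_{0,m+k}$ equals $\genfrac{\{}{\}}{0pt}{}{n+1+m}{k+m}_{m}$.

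For $1\le k\le n$ this coefficient is $\genfrac{\{}{\}}{0pt}{}{n+m+1}{k+m}_{m+1}+m\genfrac{\{}{\}}{0pt}{}{n+m}{k+m}_{m}$; invoking (\ref{tig}) with $r=m+1$, its indices chosen so that the top entry reads $n+m+1$ and the bottom entry reads $k+m$, rewrites $\genfrac{\{}{\}}{0pt}{}{n+m+1}{k+m}_{m+1}$ as $\genfrac{\{}{\}}{0pt}{}{n+m+1}{k+m}_{m}-m\genfrac{\{}{\}}{0pt}{}{n+m}{k+m}_{m}$, so the coefficient is indeed $\genfrac{\{}{\}}{0pt}{}{n+m+1}{k+m}_{m}$. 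The two boundary indices are handled directly: for $k=0$ only the term $ma_{n,m}$ contributes, giving $m\genfrac{\{}{\}}{0pt}{}{n+m}{m}_{m}=m\cdot m^{n}=m^{n+1}=\genfrac{\{}{\}}{0pt}{}{n+1+m}{m}_{m}$, and for $k=n+1$ only the term $a_{n,m+1}$ contributes, giving $\genfrac{\{}{\}}{0pt}{}{n+m+1}{n+m+1}_{m+1}=(m+1)^{0}=1=\genfrac{\{}{\}}{0pt}{}{n+1+m}{n+1+m}_{m}$. This closes the induction.

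The only delicate point I anticipate is bookkeeping: one must keep the two $r$-parameters straight — the value $m$ inherited from the hypothesis applied to $a_{n,m}$ and the value $m+1$ inherited from $a_{n,m+1}$ — and carefully align the summation ranges after the index shift, so that (\ref{tig}) may be applied termwise and only the two endpoint contributions need separate treatment. An essentially equivalent alternative would be to verify instead that the proposed right-hand side itself satisfies the recurrence (\ref{rec1}), using the basic $r$-Stirling recurrence (\ref{tab}) directly; I would adopt whichever route keeps the boundary terms tidiest.
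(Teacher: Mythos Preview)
Your proof is correct and follows essentially the same approach as the paper: induction on $n$, expansion of $a_{n,m+1}+ma_{n,m}$ via the inductive hypothesis, and termwise use of the $r$-Stirling identity (\ref{tig}) to collapse the coefficients. One minor quibble: at the endpoint $k=n+1$ the formula $\genfrac{\{}{\}}{0pt}{}{n}{r}_{r}=r^{n-r}$ you cite does not literally apply, since the lower entry is $n+m+1$ rather than the subscript; the fact you need there is simply $\genfrac{\{}{\}}{0pt}{}{N}{N}_{r}=1$ for $N\geq r$, which is immediate from (\ref{tab}).
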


\begin{proof}
We proof by induction on $n$, the result clearly holds for $n=0$. By induction
hypothesis
\begin{align*}
a_{n,m+1}+ma_{n,m} &  ={\displaystyle\sum\limits_{k=0}^{n}}%
\genfrac{\{}{\}}{0pt}{}{n+m+1}{k+m+1}%
_{m+1}a_{0,m+k+1}+m%
\genfrac{\{}{\}}{0pt}{}{n+m}{m}%
_{m}a_{0,m}\\
&  +m{\displaystyle\sum\limits_{k=1}^{n-1}}%
\genfrac{\{}{\}}{0pt}{}{n+m}{k+m}%
_{m}a_{0,m+k}\\
&  ={\displaystyle\sum\limits_{k=0}^{n}}%
\genfrac{\{}{\}}{0pt}{}{n+m+1}{k+m+1}%
_{m+1}a_{0,m+k+1}+m%
\genfrac{\{}{\}}{0pt}{}{n+m}{m}%
_{m}a_{0,m}\\
&  +m{\displaystyle\sum\limits_{k=0}^{n-2}}%
\genfrac{\{}{\}}{0pt}{}{n+m}{k+m+1}%
_{m}a_{0,m+k+1}\\
&  =%
\genfrac{\{}{\}}{0pt}{}{n+m+1}{n+m+1}%
_{m+1}a_{0,m+n+1}+%
\genfrac{\{}{\}}{0pt}{}{n+m+1}{n+m}%
_{m+1}a_{0,m+n}\\
&  +{\displaystyle\sum\limits_{k=0}^{n-2}}%
\genfrac{\{}{\}}{0pt}{}{n+m+1}{k+m+1}%
_{m+1}a_{0,m+k+1}\\
&  +m%
\genfrac{\{}{\}}{0pt}{}{n+m}{m}%
_{m}a_{0,m}+m{\displaystyle\sum\limits_{k=0}^{n-2}}%
\genfrac{\{}{\}}{0pt}{}{n+m}{k+m+1}%
_{m}a_{0,m+k+1}.
\end{align*}
From (\ref{tig}) and after some rearrangements, we get  %
\begin{align*}
a_{n,m+1}+ma_{n,m}  & =%
{\displaystyle\sum\limits_{k=0}^{n+1}}
\genfrac{\{}{\}}{0pt}{}{n+m+1}{k+m}%
_{m}a_{0,m+k}.\\
& =a_{n+1,m}.
\end{align*}

\end{proof}

\begin{theorem}\label{th2}
Given a final sequence $\left(  a_{n,0}\right)  _{n\geq0},$ define the
matrix $\mathcal{S}$ by (\ref{rec2}). Then, the entries of the
infinite matrix
$\mathcal{S}$ are given by%
\begin{equation}
a_{n,m}=%
{\displaystyle\sum\limits_{k=0}^{m}}
s\left(  m,k\right)  a_{n+k,0}\label{sah}.
\end{equation}

\end{theorem}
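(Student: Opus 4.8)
The plan is to argue by induction on $m$, using the defining recurrence (\ref{rec2}) together with the first-kind Stirling recurrence (\ref{recs}); this parallels the induction on $n$ used to prove Theorem \ref{th1}. The base case $m=0$ is immediate: the right-hand side of (\ref{sah}) reduces to $s(0,0)\,a_{n,0}=a_{n,0}$, which matches the definition of the final sequence, and this holds for every $n\geq 0$.

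For the inductive step I would fix $m$, assume (\ref{sah}) holds for that $m$ and all $n\geq 0$, and then compute $a_{n,m+1}$ from (\ref{rec2}):
\begin{equation*}
a_{n,m+1}=a_{n+1,m}-m\,a_{n,m}=\sum_{k=0}^{m}s(m,k)\,a_{n+1+k,0}-m\sum_{k=0}^{m}s(m,k)\,a_{n+k,0}.
\end{equation*}
Re-indexing the first sum by $k\mapsto k-1$ turns it into $\sum_{k=1}^{m+1}s(m,k-1)\,a_{n+k,0}$. Extending both sums to the common range $0\leq k\leq m+1$ — which is harmless because $s(m,-1)=s(m,m+1)=0$ — the coefficient of $a_{n+k,0}$ becomes $s(m,k-1)-m\,s(m,k)$.

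The only structural input needed is the identity $s(m,k-1)-m\,s(m,k)=s(m+1,k)$, which is precisely (\ref{recs}) with $n$ replaced by $m$. Substituting this yields $a_{n,m+1}=\sum_{k=0}^{m+1}s(m+1,k)\,a_{n+k,0}$, which closes the induction. I do not expect a genuine obstacle here; the one point requiring care is the index shift in the first sum and the treatment of the boundary terms, i.e. checking that the out-of-range Stirling numbers vanish so that both sums can be aligned over a single range before (\ref{recs}) is applied.
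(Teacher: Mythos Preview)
Your proof is correct and follows essentially the same approach as the paper's own proof: induction on $m$, applying the recurrence (\ref{rec2}) together with the inductive hypothesis, re-indexing the first sum, and then invoking the Stirling recurrence (\ref{recs}) to identify the coefficient of $a_{n+k,0}$ as $s(m+1,k)$. The only cosmetic difference is that the paper handles the boundary terms by explicitly splitting off $s(m,m)\,a_{n+m+1,0}$ and $-m\,s(m,0)\,a_{n,0}$ rather than extending the summation range via $s(m,-1)=s(m,m+1)=0$, but this amounts to the same argument.
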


\begin{proof}
We proof by induction on $m$, the result clearly holds for $n=0$. By induction
hypothesis and (\ref{recs}), we have
\begin{align*}
a_{n+1,m}-ma_{n,m} &  ={\displaystyle\sum\limits_{k=1}^{m+1}}s\left(
m,k-1\right)  a_{n+k,0}-m{\displaystyle\sum\limits_{k=0}^{m}}s\left(
m,k\right)  a_{n+k,0}\\
&  =s\left(  m,m\right)  a_{n+m+1,0}+{\displaystyle\sum\limits_{k=1}^{m}%
}s\left(  m,k-1\right)  a_{n+k,0}\\
&  -ms\left(  m,0\right)  a_{n,0}-m{\displaystyle\sum\limits_{k=1}^{m}%
}s\left(  m,k\right)  a_{n+k,0}\\
&  =s\left(  m,m\right)  a_{n+m+1,0}+{\displaystyle\sum\limits_{k=1}^{m}%
}\left(  s\left(  m,k-1\right)  -ms\left(  m,k\right)  \right)  a_{n+k,0}\\
&  -ms\left(  m,0\right)  a_{n,0}\\
&  =a_{n,m+1}.
\end{align*}

\end{proof}

\begin{corollary}%
\begin{equation}%
{\displaystyle\sum\limits_{k=0}^{m}}
s\left(  m,k\right)  b_{n+k}=%
{\displaystyle\sum\limits_{k=0}^{n}}
\genfrac{\{}{\}}{0pt}{}{n+m}{k+m}%
_{m}a_{m+k}.\label{ega}%
\end{equation}

\end{corollary}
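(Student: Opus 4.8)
The plan is to recognize that both sides of \eqref{ega} are closed forms for one and the same object, namely the entry $a_{n,m}$ of the matrix $\mathcal{S}$, computed in two different ways.

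First I would fix an arbitrary initial sequence $\left(a_{0,m}\right)_{m\geq0}$ and let $\mathcal{S}=\left(a_{n,m}\right)$ be the matrix generated from it by the recurrence \eqref{rec1}. Theorem~\ref{th1} then gives
\[
a_{n,m}=\sum_{k=0}^{n}\genfrac{\{}{\}}{0pt}{}{n+m}{k+m}_{m}a_{0,m+k}.
\]
On the other hand, \eqref{rec1} rearranges at once into \eqref{rec2}, so the very same matrix $\mathcal{S}$ is the one recovered from its first column $\left(a_{n,0}\right)_{n\geq0}$ by the recursion \eqref{rec2}; hence Theorem~\ref{th2} applies verbatim with ``final sequence'' $\left(a_{n,0}\right)$ and yields
\[
a_{n,m}=\sum_{k=0}^{m}s\left(m,k\right)a_{n+k,0}.
\]
Equating these two expressions for $a_{n,m}$ and writing $a_{m}:=a_{0,m}$ for the initial sequence and $b_{n}:=a_{n,0}$ for the final sequence produces exactly \eqref{ega}.

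The only point that requires attention is the compatibility of the two hypotheses: Theorem~\ref{th1} is phrased for a matrix built ``from the top'' via \eqref{rec1}, while Theorem~\ref{th2} is phrased for a matrix built ``from the left'' via \eqref{rec2}. Since \eqref{rec1} and \eqref{rec2} are literally the same identity solved for different entries, a matrix satisfying one satisfies the other, and so no genuine obstacle arises; one merely records that the chosen initial sequence and the induced final sequence are two slices of a single matrix. Alternatively, one can avoid the matrix language altogether and prove \eqref{ega} directly by induction on $n$ (or on $m$), inserting \eqref{tig} (respectively \eqref{recs}) exactly as in the proofs of Theorems~\ref{th1} and~\ref{th2}; this self-contained route costs nothing new but simply reproduces those computations.
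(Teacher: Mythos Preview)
Your proposal is correct and matches the paper's own (implicit) argument: the corollary is stated without proof immediately after Theorems~\ref{th1} and~\ref{th2}, and is meant to follow by equating the two expressions those theorems give for the single entry $a_{n,m}$. Your remark that \eqref{rec1} and \eqref{rec2} are the same relation, so the matrix built from the initial sequence automatically satisfies the hypothesis of Theorem~\ref{th2} with respect to its induced final sequence, is exactly the point that makes the corollary immediate.
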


The last identity can be viewed as the generalized Stirling
transform which reduced, for $m=0,$ to the Stirling transform (\ref{1}) of the
sequence $a_{n}$, and for $n=0$ reduces to the inverse Stirling transform (\ref{2}) of the sequence
$b_{m}$. We may now formulate the following algorithms

\textbf{Algorithm 1. }Stirling transform

\textbf{Input:} $a_{n}$

\textbf{Output:} $b_{n}$

Set $X_{m}=a_{n-m,m}$

\textbf{for} $n=0,1,\ldots$ \textbf{do}

\ \ \ \ \ $X_{n}:=a_{n}$

\ \ \ \ \ \textbf{for} $m=n,n-1,\ldots,0$ \textbf{do}

\ \ \ \ \ $X_{m-1}:=\left(  m-1\right)  X_{m-1}+X_{m}$

\ \ \ \ \ \textbf{end do}

\ \ \ \ \ $b_{n}:=X_{0}$

\textbf{end do}

\bigskip

\textbf{Algorithm 2. }inverse\textbf{ }Stirling transform

\textbf{Input:} $b_{m}$

\textbf{Output: }$a_{m}$

Set $Y_{n}=b_{n,m-n}$

\textbf{for} $m=0,1,\ldots$ \textbf{do}

\ \ \ \ \ $Y_{m}:=b_{m}$

\ \ \ \ \ \textbf{for} $n=m,m-1,\ldots,0$ \textbf{do}

\ \ \ \ \ $Y_{n-1}:=Y_{n}-\left(  m-n\right)  Y_{n-1}$

\ \ \ \ \ \textbf{end do}

\ \ \ \ \ $a_{m}:=Y_{0}$

\textbf{end do}

\begin{example}
Setting $a_{0,m}=1$ in (\ref{ega}), we get the well known identity \cite{rahmani}%
\[%
{\displaystyle\sum\limits_{k=0}^{n}}
\genfrac{\{}{\}}{0pt}{}{n+m}{k+m}%
_{m}=%
{\displaystyle\sum\limits_{k=0}^{m}}
s\left(  m,k\right)  B_{n+k},
\]
where $B_{n}$ is the $n$th Bell number.
\end{example}
\begin{example}
Let $\left(  F_{n}\right)  _{n\in\mathbb{N}_{0}}$ be the Fibonacci sequence
given by Binet's formula%
\[
F_{n}=\frac{1}{\sqrt{5}}\left(  \alpha^{n}-\beta^{n}\right)  ,
\]
where $\alpha=\frac{1+\sqrt{5}}{2}$ and $\beta=\frac{1-\sqrt{5}}{2}$. If the initial sequence $a_{0,m}=\frac{\left(  -1\right)  ^{m}}{\sqrt{5}%
}\left(  \left\langle  -\alpha\right\rangle _{m}-\left\langle  -\beta\right\rangle _{m}\right)  ,$
then we get the following matrix%
\[
\mathcal{S}=%
\begin{pmatrix}
0 & 1 & 0 & 1 & -4 & 19 & -108 & \cdots \\
1 & 1 & 1 & -1 & 3 & -13 & 71 & \\
1 & 2 & 1 & 0 & -1 & 6 & -37 & \\
2 & 3 & 2 & -1 & 2 & -7 & 34 & \\
3 & 5 & 3 & -1 & 1 & -1 & -3 & \\
5 & 8 & 5 & -2 & 3 & -8 & 31 & \\
8 & 13 & 8 & -3 & 4 & -9 & 28 & \\
13 & 21 & 13 & -5 & 7 & -17 & 59 & \\
\vdots &  &  &  &  &  &  &
\end{pmatrix}.
\]

From this matrix we observe that $a_{n,0}=a_{n,2}=-a_{n+2,3}=F_{n},$ and
$a_{n+3,4}=L_{n},$ where $\left(  L_{n}\right)  _{n\in\mathbb{N}_{0}}$ the
Lucas sequence given by Binet's formula%
\[
L_{n}=\alpha^{n}+\beta^{n}.
\]
It is well known that the $F_{n}$ and $L_{n}$ are connected by the formula%
\[
L_{n}=F_{n-1}+F_{n+1},\text{ }\left(  n\in\mathbb{N}\right)  .
\]

By (\ref{ega}), one can deduce that

\[%
{\displaystyle\sum\limits_{k=0}^{m}}
s\left(  m,k\right)  F_{n+k}=\frac{1}{\sqrt{5}}%
{\displaystyle\sum\limits_{k=0}^{n}}
\left(  -1\right)  ^{m+k}%
\genfrac{\{}{\}}{0pt}{}{n+m}{k+m}%
_{m}\left(  \left\langle  -\alpha\right\rangle _{m+k}-\left\langle  -\beta\right\rangle _{m+k}\right),
\]

and by Theorem \ref{th2}, we get%
\begin{align*}
F_{n}  & =%
{\displaystyle\sum\limits_{k=0}^{2}}
s\left(  2,k\right)  F_{n+k}=-F_{n+1}+F_{n+2}\\
& =-%
{\displaystyle\sum\limits_{k=0}^{3}}
s\left(  3,k\right)  F_{n+2+k}=-2F_{n+3}+3F_{n+4}-F_{n+5},
\end{align*}
and for $n\in\mathbb{N}_{0}$%
\[
L_{n}=%
{\displaystyle\sum\limits_{k=0}^{4}}
s\left(  4,k\right)  F_{n+3+k}=-6F_{n+4}+11F_{n+5}-6F_{n+6}+F_{n+7}.
\]

By Theorem \ref{th1}%
\begin{align*}
F_{n}  & =\frac{1}{\sqrt{5}}%
{\displaystyle\sum\limits_{k=0}^{n}}
\left(  -1\right)  ^{k}%
\genfrac{\{}{\}}{0pt}{}{n}{k}%
\left(  \left\langle -\alpha\right\rangle _{k}-\left\langle -\beta
\right\rangle _{k}\right)  \\
& =\frac{1}{\sqrt{5}}%
{\displaystyle\sum\limits_{k=0}^{n}}
\left(  -1\right)  ^{k}%
\genfrac{\{}{\}}{0pt}{}{n+2}{k+2}%
_{2}\left(  \left\langle -\alpha\right\rangle _{k+2}-\left\langle
-\beta\right\rangle _{k+2}\right)  \\
& =\frac{1}{\sqrt{5}}%
{\displaystyle\sum\limits_{k=0}^{n+2}}
\left(  -1\right)  ^{k}%
\genfrac{\{}{\}}{0pt}{}{n+5}{k+3}%
_{3}\left(  \left\langle -\alpha\right\rangle _{k+3}-\left\langle
-\beta\right\rangle _{k+3}\right),
\end{align*}
and
\[
L_{n}=\frac{1}{\sqrt{5}}%
{\displaystyle\sum\limits_{k=0}^{n+3}}
\left(  -1\right)  ^{k}%
\genfrac{\{}{\}}{0pt}{}{n+7}{k+4}%
_{4}\left(  \left\langle -\alpha\right\rangle _{k+4}-\left\langle
-\beta\right\rangle _{k+4}\right).
\]

\end{example}

\section{Generating function}
\begin{theorem}\label{th3}
Suppose that the initial sequence $a_{0,m+r}$ has the following exponential
generating function $A_{r}\left(  z\right)  =%
{\displaystyle\sum\limits_{k\geq0}} a_{0,k+r}\frac{z^{k}}{k!}.$ Then
the sequence $\{a_{n,r}\}_n$ of the $rth$ columns
of the matrix $\mathcal{S}$ has an exponential generating function $\mathcal{B}%
_{r}\left(  z\right)  =%
{\displaystyle\sum\limits_{n\geq0}}
a_{n,r}\dfrac{z^{n}}{n!}$ given by%
\begin{equation}
B_{r}\left(  z\right)  =e^{rz}A_{r}\left(  e^{z}-1\right)\label{anis}
\end{equation}

\end{theorem}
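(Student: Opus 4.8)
The plan is to combine the closed form for the matrix entries from Theorem \ref{th1} with the exponential generating function (\ref{rstigen}) for the $r$-Stirling numbers of the second kind. First I would invoke Theorem \ref{th1} with $m=r$, which gives $a_{n,r}=\sum_{k=0}^{n}\genfrac{\{}{\}}{0pt}{}{n+r}{k+r}_{r}a_{0,r+k}$, so that
\[
B_{r}(z)=\sum_{n\geq0}a_{n,r}\frac{z^{n}}{n!}=\sum_{n\geq0}\frac{z^{n}}{n!}\sum_{k=0}^{n}\genfrac{\{}{\}}{0pt}{}{n+r}{k+r}_{r}a_{0,r+k}.
\]

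Next I would interchange the two summations. Since $\genfrac{\{}{\}}{0pt}{}{n+r}{k+r}_{r}=0$ whenever $k>n$, the double sum ranges over all pairs $(n,k)$ with $0\leq k\leq n$, and reorganizing it by first summing over $n\geq k$ yields
\[
B_{r}(z)=\sum_{k\geq0}a_{0,r+k}\sum_{n\geq k}\genfrac{\{}{\}}{0pt}{}{n+r}{k+r}_{r}\frac{z^{n}}{n!}.
\]
The inner sum is precisely the left-hand side of (\ref{rstigen}) with $x$ replaced by $z$, hence equals $\frac{1}{k!}e^{rz}(e^{z}-1)^{k}$. Substituting this and pulling the factor $e^{rz}$ outside the sum gives
\[
B_{r}(z)=e^{rz}\sum_{k\geq0}a_{0,r+k}\frac{(e^{z}-1)^{k}}{k!}=e^{rz}A_{r}(e^{z}-1),
\]
where the last equality is just the definition of $A_{r}$. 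This is (\ref{anis}).

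The only point that requires care is the legitimacy of swapping the order of summation. Working in the ring of formal power series this is immediate, since for each fixed $n$ only the finitely many terms with $k\leq n$ contribute to the coefficient of $z^{n}$. Alternatively, if $A_{r}$ has positive radius of convergence one observes that $e^{rz}(e^{z}-1)^{k}$ vanishes to order $k$ at the origin, so the rearranged series converges in a neighbourhood of $z=0$ and represents the same analytic function; in either reading the manipulation is valid. I do not anticipate any obstacle beyond this bookkeeping, the substance of the argument being the two identities already established.
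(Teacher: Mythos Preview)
Your proposal is correct and follows essentially the same route as the paper's proof: invoke Theorem~\ref{th1} to express $a_{n,r}$ via $r$-Stirling numbers, swap the order of summation, and apply the generating function~(\ref{rstigen}) to collapse the inner sum. The paper presents these steps more tersely (starting directly from the swapped double sum), while you add a welcome remark justifying the interchange in the formal-power-series setting.
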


\begin{proof}
We have%
\begin{align*}
B_{r}\left(  z\right)   & =%
{\displaystyle\sum\limits_{k\geq0}}
a_{0,r+k}%
{\displaystyle\sum\limits_{n\geq0}}
\genfrac{\{}{\}}{0pt}{}{n+r}{k+r}%
_{r}\dfrac{z^{n}}{n!}\\
& =%
{\displaystyle\sum\limits_{k\geq0}}
a_{0,r+k}\frac{1}{k!}e^{rz}\left(  e^{z}-1\right)  ^{k}\\
& =e^{rz}%
{\displaystyle\sum\limits_{k\geq0}}
a_{0,r+k}\frac{\left(  e^{z}-1\right)  ^{k}}{k!}\\
& =e^{rz}A_{r}\left(  e^{z}-1\right)  .
\end{align*}

\end{proof}

\begin{theorem}\label{th4}
Suppose that the final sequence $a_{n+r,0}$ has the following exponential
generating function $\mathcal{B}_{r}\left(  z\right)  =%
{\displaystyle\sum\limits_{k\geq0}} a_{k+r,0}\frac{z^{k}}{k!}.$ Then
the sequence $\{a_{r,m}\}_m$ of the $rth$ rows of
the matrix $\mathcal{S}$ has an exponential generating function $\mathcal{A}%
_{r}\left(  z\right)  =%
{\displaystyle\sum\limits_{m\geq0}}
a_{r,m}\dfrac{z^{m}}{m!}$ given by%
\begin{equation}
\mathcal{A}_{r}\left(  z\right)  =\mathcal{B}_{r}\left(  \ln(1+z)\right) \label{Anis2} .
\end{equation}

\end{theorem}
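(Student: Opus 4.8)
The plan is to mirror the proof of Theorem \ref{th3}, replacing the ingredients of the ``forward'' direction by their ``inverse'' counterparts: Theorem \ref{th2} in place of Theorem \ref{th1}, the generating function (\ref{gensti1}) for the Stirling numbers of the first kind in place of (\ref{rstigen}), and the substitution $z\mapsto\ln(1+z)$ in place of $z\mapsto e^{z}-1$. Concretely, I would first apply Theorem \ref{th2} with $n=r$ to express each entry of the $r$th row in terms of the final sequence,
\[
a_{r,m}=\sum_{k=0}^{m}s\left(m,k\right)a_{r+k,0},
\]
and substitute this into the definition of $\mathcal{A}_{r}$, obtaining the double sum
\[
\mathcal{A}_{r}(z)=\sum_{m\geq0}\frac{z^{m}}{m!}\sum_{k=0}^{m}s\left(m,k\right)a_{r+k,0}.
\]

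The next step is to interchange the order of summation and collect the inner series:
\[
\mathcal{A}_{r}(z)=\sum_{k\geq0}a_{r+k,0}\sum_{m\geq k}s\left(m,k\right)\frac{z^{m}}{m!}.
\]
By (\ref{gensti1}) the inner sum is $\tfrac{1}{k!}\left(\ln(1+z)\right)^{k}$, so that
\[
\mathcal{A}_{r}(z)=\sum_{k\geq0}a_{r+k,0}\frac{\left(\ln(1+z)\right)^{k}}{k!}=\mathcal{B}_{r}\!\left(\ln(1+z)\right),
\]
the last equality being simply the definition of $\mathcal{B}_{r}$ evaluated at $\ln(1+z)$, which completes the proof.

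There is no serious obstacle here; the computation is the formal-power-series dual of the one in Theorem \ref{th3}. The only point deserving a word of care is the interchange of the two summations: it is legitimate because $\ln(1+z)$ has zero constant term, so the composition $\mathcal{B}_{r}(\ln(1+z))$ is a well-defined formal power series and each coefficient of $z^{m}$ in the rearranged sum is a finite sum (running over $k\le m$, since $s(m,k)=0$ for $k>m$). One could also phrase the whole argument as an instance of the Stirling inversion between (\ref{anis}) and (\ref{Anis2}), the substitutions $z\mapsto e^{z}-1$ and $z\mapsto\ln(1+z)$ being mutually inverse, but the direct derivation above is cleaner and self-contained.
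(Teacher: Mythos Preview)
Your proof is correct and follows essentially the same route as the paper's: substitute the expression from Theorem~\ref{th2} into $\mathcal{A}_r(z)$, swap the order of summation, and apply (\ref{gensti1}) to recognize $\mathcal{B}_r(\ln(1+z))$. If anything, your version is slightly more explicit, spelling out the substitution step and justifying the interchange of sums, while the paper jumps straight to the double sum already in swapped form.
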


\begin{proof}
We have
\begin{align*}
\mathcal{A}_{r}\left(  z\right)    & =%
{\displaystyle\sum\limits_{k\geq0}}
a_{r+k,0}%
{\displaystyle\sum\limits_{m\geq0}}
s\left(  m,k\right)  \dfrac{z^{m}}{m!}\\
& =%
{\displaystyle\sum\limits_{k\geq0}}
a_{r+k,0}\dfrac{\left(  \ln(1+z)\right)  ^{k}}{k!}\\
& =\mathcal{B}_{r}\left(  \ln(1+z)\right)  .
\end{align*}

\end{proof}

\begin{example}
A derangement on a set $\left\{  1,2,\ldots,m\right\}  $ is a permutation
$\pi=i_{1}i_{2}\cdots i_{m}$ such that $i_{k}\neq k$ for $k=1,2,\ldots m.$ The
number of derangements on $\left\{  1,2,\ldots,m\right\}  $ is denoted by
$D_{m}$ and given by $D_{m}=\left[  \frac{m!}{e}\right]  ,$ where $\left[
x\right]  $ the nearest integer function.
Now, if the initial sequence $a_{0,m}=\left(  -1\right)  ^{m}D_{m},$ then we get
the following matrix
\[%
\mathcal{S}=
\begin{pmatrix}
1 & 0 & 1 & -2 & 9 & -44 & \cdots\\
0 & 1 & 0 & 3 & -8 & 45 & \\
1 & 1 & 3 & 1 & 13 & -39 & \\
1 & 4 & 7 & 16 & 13 & 76 & \\
4 & 11 & 30 & 61 & 128 & 159 & \\
11 & 41 & 121 & 311 & 671 & 1381 & \\
\vdots &  &  &  &  &  &
\end{pmatrix}
\]
The generating function of the sequence $a_{0,m}$ is  $A_{0}(z)=\frac{e^z}{1+z}$. It follows from (\ref{anis}) that $B_{0}(z)=exp(e^{z}-z-1)$, and we notice that $a_{n,0}$ is the number $v_n$ of
partitions of $\left\{  1,2,\ldots,n\right\}  $ without singletons (see for instance \cite{sun}). By
(\ref{ega}), one can then deduce that%
\[%
{\displaystyle\sum\limits_{k=0}^{m}}
s\left(  m,k\right)  v_{n+k}=%
{\displaystyle\sum\limits_{k=0}^{n}}
\left(  -1\right)  ^{m+k}%
\genfrac{\{}{\}}{0pt}{}{n+m}{k+m}%
_{m}D_{m+k}%
\]
\bigskip If $m=0,$ we have%
\[
v_{n}=%
{\displaystyle\sum\limits_{k=0}^{n}}
\left(  -1\right)  ^{k}%
\genfrac{\{}{\}}{0pt}{}{n}{k}%
D_{k}.
\]

\end{example}

\begin{example}
The exponential generating function of the Bernoulli polynomials $B_{n}\left(
x\right)  $ is%
\[
\mathcal{B}_{0}\left(  z\right)  :=\dfrac{ze^{xz}}{e^{z}-1}=%
{\displaystyle\sum\limits_{n\geq0}}
B_{n}\left(  x\right)  \frac{z^{n}}{n!}.
\]
By Theorem 4, we have%
\[
\mathcal{A}_{0}\left(  z\right)  =\frac{\left(  1+z\right)  ^{x}\ln\left(
1+z\right)  }{z},
\]
It is not difficult to show that
\[
\left[  z^{m}\right]  \mathcal{A}_{0}\left(  z\right)  =%
{\displaystyle\sum\limits_{i=0}^{m}}
\left(  -1\right)  ^{m-i}\frac{\left(  x\right)  _{i}}{m-i+1},
\]
where $\left[  z^{n}\right]  f\left(  z\right)  $ denote the operation of
extracting the coefficient of $z^{n}$ in the formal power series $f\left(
z\right)  =%
{\displaystyle\sum}
f_{n}z^{n}$. Now, let us consider $\mathcal{S}$ defined by (\ref{rec2}) with the final sequence $a_{n,0}%
=B_{n}\left(  x\right)  ,$ by (\ref{ega}), we have%
\[%
{\displaystyle\sum\limits_{k=0}^{m}}
s\left(  m,k\right)  B_{n+k}\left(  x\right)  =%
{\displaystyle\sum\limits_{k=0}^{n}}
\genfrac{\{}{\}}{0pt}{}{n+m}{k+m}%
_{m}%
{\displaystyle\sum\limits_{i=0}^{m+k}}
\left(  -1\right)  ^{m+k-i}\frac{\left(  x\right)  _{i}}{m+k-i+1}%
\]
\end{example}
\begin{example}
Catalan and Motzkin numbers naturally appear in a large number of
combinatorial objects. It is well known that the Catalan number $C_{n}%
=\frac{1}{n+1}\binom{2n}{n}$ and Motzkin number $M_{n}=%
{\displaystyle\sum\limits_{k=0}^{\left\lfloor n/2\right\rfloor }}
\frac{1}{k+1}\binom{n}{2k}\binom{2k}{k}$ are connected by \cite{Bernhart}
\[
C_{n+1}=%
{\displaystyle\sum\limits_{k=0}^{n}}
\binom{n}{k}M_{k}\Longleftrightarrow M_{n}=%
{\displaystyle\sum\limits_{k=0}^{n}}
\left(  -1\right)  ^{n-k}\binom{n}{k}C_{k+1}.
\]
Using the generalized Stirling transform, we can show that the Catalan numbers
are related with Motzkin numbers in terms of Stirling numbers by%
\begin{equation}
{\displaystyle\sum\limits_{k=0}^{n}}
s\left(  n,k\right)  M_{k}=%
{\displaystyle\sum\limits_{k=0}^{n+1}}
s\left(  n+1,k\right)  C_{k},\label{Anis3}
\end{equation}
and%
\begin{equation}
C_{n}=\delta_{n,0}+%
{\displaystyle\sum\limits_{k=1}^{n}}
{\displaystyle\sum\limits_{i=0}^{k-1}}
\genfrac{\{}{\}}{0pt}{}{n}{k}%
s\left(  k-1,i\right)  M_{i}\Longleftrightarrow M_{n}=%
{\displaystyle\sum\limits_{k=0}^{n}}
{\displaystyle\sum\limits_{i=0}^{k+1}}
\genfrac{\{}{\}}{0pt}{}{n}{k}%
s\left(  k+1,i\right)  C_{i}.\label{Anis4}
\end{equation}

Setting the final sequence  $a_{n,0}=C_{n},$ we get the following matrix%
\[
\mathcal{S}=%
\begin{pmatrix}
1 & 1 & 1 & 1 & 0 & 1 & -5 & 29 & \cdots\\
1 & 2 & 3 & 3 & 1 & 0 & -1 & 7 & \\
2 & 5 & 9 & 10 & 4 & -1 & 1 & -1 & \\
5 & 14 & 28 & 34 & 15 & -4 & 5 & -11 & \\
14 & 42 & 90 & 117 & 56 & -15 & 19 & -42 & \\
42 & 132 & 297 & 407 & 209 & -56 & 72 & -160 & \\
132 & 429 & 1001 & 1430 & 780 & -208 & 272 & -614 & \\
\vdots &  &  &  &  &  &  &  &
\end{pmatrix}
.
\]
Since%
\[
\mathcal{B}_{0}\left(  z\right)  =%
{\displaystyle\sum\limits_{n\geq0}}
C_{n}\frac{z^{n}}{n!}=_{1}F_{1}\left(
\begin{array}
[c]{c}%
1/2\\
2
\end{array}
;4z\right)  ,
\]
where $_{1}F_{1}\left(
\begin{array}
[c]{c}%
p\\
q
\end{array}
;z\right)  =%
{\displaystyle\sum\limits_{n\geq0}}
\frac{\left\langle p\right\rangle _{n}}{\left\langle q\right\rangle _{n}%
n!}z^{n}.$ It follows from (\ref{Anis2}) that%
\[
\mathcal{A}_{0}\left(  z\right)  =%
{\displaystyle\sum\limits_{n\geq0}}
R_{n}\frac{z^{n}}{n!}=_{1}F_{1}\left(
\begin{array}
[c]{c}%
1/2\\
2
\end{array}
;4\ln\left(  1+z\right)  \right),
\]
and
\begin{equation}
{\displaystyle\sum\limits_{k=0}^{m}}
s\left(  m,k\right)  C_{n+k}=%
{\displaystyle\sum\limits_{k=0}^{n}}
\genfrac{\{}{\}}{0pt}{}{n+m}{k+m}%
_{m}R_{m+k}\label{Anis5}
\end{equation}

Now, if the initial sequence $a_{0,m}=R_{m+1},$ we get the following matrix%
\[
\mathcal{T}=%
\begin{pmatrix}
1 & 1 & 1 & 0 & 1 & -5 & 29 & -196 & \cdots\\
1 & 2 & 2 & 1 & -1 & 4 & -22 & 146 & \\
2 & 4 & 5 & 2 & 0 & -2 & 14 & -100 & \\
4 & 9 & 12 & 6 & -2 & 4 & -16 & 93 & \\
9 & 21 & 30 & 16 & -4 & 4 & -3 & -26 & \\
21 & 51 & 76 & 44 & -12 & 17 & -44 & 172 & \\
51 & 127 & 196 & 120 & -31 & 41 & -92 & 282 & \\
\vdots &  &  &  &  &  &  &  &
\end{pmatrix}.
\]
From this matrix we observe that $a_{n,0}=M_{n}.$ We prove this observation
using generating functions. We have%
\begin{align*}
A_{0}\left(  z\right)    & =%
{\displaystyle\sum\limits_{n\geq0}}
R_{n+1}\frac{z^{n}}{n!}\\
& =\frac{d}{dz}\left(
{\displaystyle\sum\limits_{n\geq0}}
R_{n}\frac{z^{n}}{n!}\right)  \\
& =\frac{1}{1+z}\text{ }_{1}F_{1}\left(
\begin{array}
[c]{c}%
3/2\\
3
\end{array}
;4\ln\left(  1+z\right)  \right)  .
\end{align*}
From (\ref{anis}), we get%
\begin{align*}
B_{0}\left(  z\right)    & =_{1}F_{1}\left(
\begin{array}
[c]{c}%
3/2\\
3
\end{array}
;4z\right)  e^{-z}\\
& =\frac{d}{dz}\left(
{\displaystyle\sum\limits_{n\geq0}}
C_{n}\frac{z^{n}}{n!}\right)
{\displaystyle\sum\limits_{n\geq0}}
\left(  -1\right)  ^{n}\frac{z^{n}}{n!}\\
& =%
{\displaystyle\sum\limits_{n\geq0}}
\left(
{\displaystyle\sum\limits_{k=0}^{n}}
\left(  -1\right)  ^{n-k}\binom{n}{k}C_{k+1}\right)  \frac{z^{n}}{n!}\\
& =%
{\displaystyle\sum\limits_{n\geq0}}
M_{n}\frac{z^{n}}{n!}.
\end{align*}
It follows
\begin{equation}
{\displaystyle\sum\limits_{k=0}^{m}}
s\left(  m,k\right)  M_{n+k}=%
{\displaystyle\sum\limits_{k=0}^{n}}
\genfrac{\{}{\}}{0pt}{}{n+m}{k+m}%
_{m}R_{m+k+1}.\label{Anis6}
\end{equation}
Combining results (\ref{Anis5}) and (\ref{Anis6}) gives (\ref{Anis3}) and (\ref{Anis4}).
\end{example}

\section{Hankel transform}
The Hankel transform of a sequence $\alpha_{n}$ is the sequence of Hankel
determinants  $\det\left(  \alpha_{i+j}\right)  _{0\leq i,j\leq n}$. A number of methods for computing the Hankel determinants have been widely
investigated \cite{yip,Kra1,Kra2,barry}. It is well known that the Hankel transform of sequences $\alpha_{n}$ and $\beta_{n}$ are equal under the binomial transform \cite{layman}

\[
\beta_{n}=%
{\displaystyle\sum\limits_{k=0}^{n}}
\binom{n}{k}\alpha_{k}.
\]

A natural question arises: "What about the Hankel transform of the sequences $a_{n}$ and $b_{n}$ under the Stirling transform?" In this section we show that there is a connection between the generalized Stirling transform and the Hankel determinants.

\begin{theorem}
For $n\geq0,$ we have%
\[
\det\left(  a_{i,j}\right)  _{0\leq i,j\leq n}=\det\left(  b_{i+j}\right)
_{0\leq i,j\leq n}.
\]

\end{theorem}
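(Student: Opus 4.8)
The plan is to realize the matrix $\left(a_{i,j}\right)_{0\le i,j\le n}$ as a product of the Hankel matrix $\left(b_{i+j}\right)_{0\le i,j\le n}$ with a unipotent triangular matrix of Stirling numbers, and then simply pass to determinants. The whole statement should fall out of Theorem~\ref{th2} with essentially no extra work.

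First I would invoke Theorem~\ref{th2}. Writing $b_k:=a_{k,0}$ for the final sequence, that theorem gives $a_{i,j}=\sum_{k=0}^{j}s(j,k)\,b_{i+k}$ for all $i,j$. Since $s(j,k)=0$ whenever $k>j$, I can harmlessly extend the range of summation and record, for all $0\le i,j\le n$, the identity $a_{i,j}=\sum_{k=0}^{n}s(j,k)\,b_{i+k}$. Next I would read this as a matrix factorization: set $A=\left(a_{i,j}\right)_{0\le i,j\le n}$, let $H=\left(b_{i+k}\right)_{0\le i,k\le n}$ be the Hankel matrix, and let $S=\left(s(j,k)\right)_{0\le j,k\le n}$. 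Then $\left(HS^{\top}\right)_{i,j}=\sum_{k}b_{i+k}\,s(j,k)=a_{i,j}$, so $A=HS^{\top}$.

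Taking determinants then finishes the argument: $\det A=\det H\cdot\det S^{\top}=\det H\cdot\det S$. The matrix $S$ is lower triangular with diagonal entries $s(j,j)=1$, hence $\det S=1$ and $\det A=\det H=\det\left(b_{i+j}\right)_{0\le i,j\le n}$, which is exactly the claim. The only point that needs care is the index bookkeeping in $s(\cdot,\cdot)$ — one must keep the first argument as the row index so that the Stirling factor appears as $S^{\top}$ and $S$ itself comes out lower (not upper) triangular — but once that convention is fixed there is no genuine obstacle. One could instead try to route the proof through Theorem~\ref{th1} and the $r$-Stirling arrays, but that is less transparent since those arrays are not square in an obvious way; the Theorem~\ref{th2} factorization is the clean one and makes the result an immediate corollary.
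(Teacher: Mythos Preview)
Your proof is correct and is essentially the same argument as the paper's: both rest on Theorem~\ref{th2} (formula~(\ref{sah})) to relate the columns of $(a_{i,j})$ to those of the Hankel matrix $(b_{i+j})$. The paper carries this out as a sequence of determinant-preserving column operations on $(b_{i+j})$, while you package the same operations as a single factorization $A=HS^{\top}$ with $S$ unipotent lower triangular; these are two phrasings of one idea.
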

\begin{proof}
We can write
\[
\det\left(  b_{i+j}\right)  _{0\leq i\,,j\leq n}=%
\begin{vmatrix}
a_{0,0} & a_{1,0} & a_{2,0} & a_{3,0} & \cdots & a_{n,0}\\
a_{1,0} & a_{2,0} & a_{3,0} & a_{4,0} & \cdots & a_{n+1,0}\\
\vdots & \vdots & \vdots &  & \ddots & \vdots\\
a_{n,0} & a_{n+1,0} & a_{n+2,0} & a_{n+3,0} & \cdots & a_{2n,0}%
\end{vmatrix}
,
\]
\bigskip after applying (\ref{rec2}), the determinant is unchanged%
\[
\det\left(  b_{i+j}\right)  _{0\leq i\,,j\leq n}=%
\begin{vmatrix}
a_{0,0} & a_{1,0} & a_{2,0}-a_{1,0} & a_{3,0}-a_{2,0}-2\left(  a_{2,0}%
-a_{1,0}\right)   & \cdots\\
a_{1,0} & a_{2,0} & a_{3,0}-a_{2,0} & a_{4,0}-a_{3,0}-2\left(  a_{3,0}%
-a_{2,0}\right)   & \cdots\\
\vdots & \vdots & \vdots & \vdots \\
a_{n,0} & a_{n+1,0} & a_{n+2,0}-a_{n+1,0} & a_{n+3,0}-a_{n+1,0}-2\left(
a_{n+2,0}-a_{n+1,0}\right)   & \cdots
\end{vmatrix}.
\]
Using (\ref{sah}), we get%
\[
\det\left(  b_{i+j}\right)  _{0\leq i\,,j\leq n}=%
\begin{vmatrix}
a_{0,0} & a_{0,1} & a_{0,2} & a_{0,3} & \cdots & a_{0,n}\\
a_{1,0} & a_{1,1} & a_{1,2} & a_{1,3} & \cdots & a_{1,n}\\
\vdots & \vdots & \vdots &  & \ddots & \\
a_{n,0} & a_{n,1} & a_{n,2} & a_{n,3} &  & a_{n,n}%
\end{vmatrix}
,
\]
from which the relation follows.
\end{proof}
The answer to the previous question is given in the following
\begin{corollary}
For $n\in\mathbb{N}_{0},$ we have%
\[
\det\left(  b_{i+j}\right)  _{0\leq i,j\leq n}=\det\left(
{\displaystyle\sum\limits_{k=0}^{i}}
\genfrac{\{}{\}}{0pt}{}{i+j}{k+j}%
_{j}a_{k+j}\right)  _{0\leq i,j\leq n}.%
\]

\end{corollary}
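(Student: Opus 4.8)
The plan is to derive this corollary as an immediate consequence of the preceding Theorem combined with the generalized Stirling transform identity (\ref{ega}) already established in the Corollary following Theorem \ref{th2}. First I would recall that the Theorem just proved states $\det\left(a_{i,j}\right)_{0\le i,j\le n}=\det\left(b_{i+j}\right)_{0\le i,j\le n}$, where the entries $a_{i,j}$ are those of the matrix $\mathcal{S}$ built from the final sequence $b_m=a_{m,0}$ via (\ref{rec2}). So the left-hand determinant $\det\left(b_{i+j}\right)$ equals $\det\left(a_{i,j}\right)$, and everything reduces to rewriting each individual entry $a_{i,j}$ in the form appearing on the right-hand side.

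The key step is then to invoke Theorem \ref{th1}'s output identity in the form packaged as (\ref{ega}): for each pair of indices, $a_{n,m}=\sum_{k=0}^{n}\genfrac{\{}{\}}{0pt}{}{n+m}{k+m}_{m}a_{0,m+k}$. Applied with $n=i$ and $m=j$ and writing $a_{k+j}:=a_{0,k+j}$ for the initial-row sequence, this gives $a_{i,j}=\sum_{k=0}^{i}\genfrac{\{}{\}}{0pt}{}{i+j}{k+j}_{j}a_{k+j}$, which is exactly the entry displayed inside the determinant on the right-hand side of the corollary. Substituting this expression entrywise into $\det\left(a_{i,j}\right)_{0\le i,j\le n}$ yields the claimed formula, and chaining the two equalities $\det\left(b_{i+j}\right)=\det\left(a_{i,j}\right)=\det\left(\sum_{k=0}^{i}\genfrac{\{}{\}}{0pt}{}{i+j}{k+j}_{j}a_{k+j}\right)$ finishes the argument.

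One bookkeeping point worth making explicit: the Theorem is stated in terms of the matrix $\mathcal{S}$ reconstructed from the final sequence via (\ref{rec2}) (Theorem \ref{th2}), whereas (\ref{ega}) and Theorem \ref{th1} describe the same matrix built forward from the initial sequence via (\ref{rec1}); since (\ref{rec1}) and (\ref{rec2}) are mutually inverse, these are one and the same matrix $\mathcal{S}$, so there is no inconsistency in using both descriptions of $a_{i,j}$ simultaneously — the diagonal-crossing entries $a_{0,m}$ serve as the initial sequence and $a_{n,0}=b_n$ as the final sequence of the \emph{same} array. I expect the only genuine obstacle — really just a matter of care rather than difficulty — is matching the index conventions so that the $r$-Stirling subscript and the two superscript arguments line up correctly with the statement of (\ref{ega}); once that is checked, the corollary is a one-line substitution into the already-proved determinantal identity.
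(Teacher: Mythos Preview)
Your proposal is correct and matches the paper's intent: the corollary is stated without proof precisely because it follows immediately by combining the preceding determinantal identity $\det(a_{i,j})=\det(b_{i+j})$ with the entrywise formula $a_{i,j}=\sum_{k=0}^{i}\genfrac{\{}{\}}{0pt}{}{i+j}{k+j}_{j}a_{0,k+j}$ from Theorem~\ref{th1}. Your extra remark that recurrences (\ref{rec1}) and (\ref{rec2}) define the same array $\mathcal{S}$, so both descriptions of $a_{i,j}$ may be used together, is the right justification for why the substitution is legitimate.
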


\end{document}